\documentclass[letterpaper, 10 pt, conference]{ieeeconf}
\IEEEoverridecommandlockouts

\makeatletter

\let\proof\@undefined
\let\endproof\@undefined
\makeatother

\usepackage{graphicx}          
\usepackage{amssymb}
\usepackage{amsmath}
\usepackage{amsthm}
\usepackage{cleveref}
\usepackage{pgfplots}
\usepackage{array}
\usepgfplotslibrary{fillbetween}
\pgfplotsset{compat=newest}
\usetikzlibrary{calc}

\crefname{equation}{}{}
\Crefname{equation}{Equation}{Equations}

\newtheorem{thm}{Theorem}
\crefname{thm}{Theorem}{Theorems}
\Crefname{thm}{Theorem}{Theorems}

\newtheorem{assum}{Assumption}
\crefname{assum}{Assumption}{Assumptions}
\Crefname{assum}{Assumption}{Assumptions}
\newtheorem{prop}{Proposition}

\crefname{figure}{Figure}{Figures}
\Crefname{figure}{Figure}{Figures}

\theoremstyle{definition}
\usepackage{thmtools}

\crefname{definition}{Definition}{Definitions}
\Crefname{definition}{Definition}{Definitions}

\theoremstyle{plain}
\declaretheorem[style=definition,qed=$\blacksquare$]{example}

\crefname{example}{Example}{Examples}
\Crefname{example}{Example}{Examples}

\newtheorem{corollary}{Corollary}
\theoremstyle{remark}

\theoremstyle{definition}

\newcommand{\ul}{\underline}

\newcommand{\F}{\mathcal{F}}
\newcommand{\Rsim}{\overset{\mathcal{R}}{\sim}}

\newcommand{\into}{\rightarrow}
\newcommand{\Rbb}{\mathbb{R}}

\newcommand{\tr}{\top}

\newcommand{\sm}{\backslash}
\newcommand{\set}[1]{\left\{#1\right\}}

\newcommand{\paren}[1]{\left( #1 \right)}
\newcommand{\brak}[1]{\left[ #1 \right]}
\newcommand{\mat}[2]{\brak{\begin{array}{#1} #2 \end{array}}}

\newcommand{\eqnn}[1]{\begin{equation}\begin{aligned} #1 \end{aligned}\end{equation}}

\newcommand{\spec}{\operatorname{spec}}
\newcommand{\id}{\operatorname{id}}

\usepackage[textwidth=2in]{todonotes}

\definecolor{CornflowerBlue}{rgb}{0.258824,0.258824,0.435294}
\definecolor{cfblue}{rgb}{0.258824,0.258824,0.435294}
\definecolor{SkyBlue}{rgb}{0.196078,0.6,0.8}
\definecolor{TitleBlue}{RGB}{47, 67, 114}
\definecolor{dblue}{rgb}{.098,.243,.424}
\definecolor{lblue}{rgb}{.33,.57,.835}
\definecolor{llblue}{rgb}{.447,.643,.831}
\definecolor{lbluesam}{rgb}{.447,.643,.831} 
\definecolor{mblue}{rgb}{0.176, 0.380, 0.659}
\definecolor{lcomp}{rgb}{.969,.765,.416}
\definecolor{ddorange}{rgb}{0.624, 0.365, 0}
\definecolor{dorange}{rgb}{0.72, 0.506, 0.125}
\definecolor{lorange}{rgb}{0.961, 0.678, 0.165}
\definecolor{lgreen}{rgb}{.812,.969,.435}
\definecolor{dgreen}{RGB}{15,111,3}
\definecolor{mgreen}{RGB}{84, 174, 50}
\definecolor{lyellow}{rgb}{1,.859,.451}
\definecolor{dyellow}{rgb}{.651,.482,0}
\definecolor{lred}{rgb}{1,.6,.451}
\definecolor{dred}{rgb}{.65,.176,0}
\definecolor{dcompb}{RGB}{157,35,0}  
\definecolor{lcompb}{RGB}{186,70,30}
\definecolor{llcompb}{RGB}{255,136,92}
\definecolor{lcompbsam}{RGB}{255,136,92}  
\definecolor{dpurple}{RGB}{45,0,95}
\definecolor{mpurple}{RGB}{77,0,159}
\definecolor{lpurple}{RGB}{143,73,206}
\definecolor{purplea}{RGB}{122,24,207}
\definecolor{purpleb}{RGB}{68,17,112}
\definecolor{pptgreen}{RGB}{147, 208, 80}

\title{Generalizing infinitesimal contraction analysis to hybrid systems} 
\author{Samuel A. Burden and Samuel D. Coogan\thanks{S. A. Burden is with the Department of Electrical Engineering, University of Washington, Seattle, WA, USA, \texttt{sburden@uw.edu}. S. D. Coogan is with the School of Electrical and Computer Engineering and the School of Civil and Environmental Engineering, Georgia Institute of Technology, Atlanta, GA, USA, \texttt{sam.coogan@gatech.edu}.  Support is provided by the U.~S. Army Research Laboratory and the U.~S. Army Research Office under contract/grant number W911NF-16-1-0158 and by the National Science Foundation under award number 1749357.
}}

\date{}

\renewcommand\footnotemark{}

\begin{document}
\maketitle
\begin{abstract}
Infinitesimal contraction analysis, 
wherein global asymptotic convergence results are obtained from local dynamical properties,
has proven to be a powerful tool for applications in biological, mechanical, and transportation systems. 
Thus far, the technique has been restricted to systems governed by a single smooth differential or difference equation.
We generalize infinitesimal contraction analysis to hybrid systems governed by interacting differential and difference equations.
Our theoretical results are illustrated on a series of examples.

\end{abstract}

\renewcommand{\H}{\mathcal{H}}
\newcommand{\D}{\mathcal{D}}
\renewcommand{\F}{\mathcal{F}}
\newcommand{\G}{\mathcal{G}}
\newcommand{\R}{\mathcal{R}}
\newcommand{\g}{\mathfrak{g}}
\newcommand{\N}{\mathcal{N}}
\newcommand{\mcM}{\mathcal{M}}
\newcommand{\olR}{\overline{\R}}
\newcommand{\xcap}{x^\text{cap}}
\newcommand{\xjam}{x^\text{jam}}
\newcommand{\xcrit}{x^\text{crit}}
\newcommand{\ulx}{\underline{x}}
\newcommand{\J}{\mathcal{J}}

\section{Introduction}

A dynamical system is \emph{contractive} if all trajectories converge to one another~\cite{LOHMILLER:1998bf}.
Contractive systems enjoy strong asymptotic properties, e.g. any equilibrium or periodic orbit is globally asymptotically stable.
Provocatively, these global results can sometimes be obtained by analyzing local (or \emph{infinitesimal}) properties of the system's dynamics.
In smooth differential (or difference) equations, for instance, a bound on a matrix measure (or induced norm) of the derivative of the equation can be used to prove global contractivity~\cite{LOHMILLER:1998bf, Pavlov:2004lr, Sontag:2010fk, Sontag:2014eu, Margaliot:2015wd, Aminzarey:2014rm};
this approach has been successfully applied to biological~\cite{Raveh:2015wm, Margaliot:2014qv,Wang2005-zj}, mechanical~\cite{Lohmiller2000-kj,Manchester2014-kc}, and transportation~\cite{coogan2015compartmental, Como:2015ne} systems. 

At its core, the \emph{infinitesimal} approach to contractivity leverages local dynamical properties of continuous--time flow (or discrete--time reset) to bound the time rate of change of the distance between trajectories.
We generalize infinitesimal contraction analysis to hybrid systems, leveraging local dynamical properties of continuous--time flow \emph{and} discrete--time reset to bound the time rate of change of the \emph{intrinsic} distance between trajectories.
The intrinsic distance metric we employ is defined in a natural way based on the idea that the distance between a point in a guard and the point it resets to should be zero.%
\footnote{i.e. $d(x,\R(x)) = 0$ for all $x\in\G$}

This idea was proven in~\cite{Burden2015-ip} to yield a (pseudo%
\footnote{On the topological quotient space obtained from the smallest equivalence relation on $\D$ containing $\set{(x,\R(x)) : x\in\G}$, the function is a distance metric compatible with the quotient topology~\cite[Thm.~13]{Burden2015-ip}.}%
) distance metric that assigns finite distance to states in distinct discrete modes (so long as there exist trajectories connecting the modes).
The intrinsic distance metric is distinct from the Skorohod~\cite{Gokhman2008-br} or Tavernini~\cite{Tavernini1987-wu} \emph{trajectory metrics}~\cite[Sec.~V-A]{Burden2015-ip} and from the \emph{distance function} introduced in~\cite{Biemond2016-ur}; it is a particular instantiation of the \emph{distance function} defined in~\cite{Biemond2013-ph}.

The conditions we obtain for infinitesimal contraction have intuitive appeal:
the derivative of the vector field, which captures the infinitesimal dynamics of continuous--time flow, must be infinitesimally contractive with respect to the matrix measure determined by the vector norm used in each discrete mode;
the \emph{saltation  matrix}, which captures the infinitesimal dynamics of discrete--time reset, must be contractive with respect to the induced norm determined by the vector norms used on either side of the reset.
If upper and lower bounds on \emph{dwell time} are available, we can bound the intrinsic distance between trajectories, whether this distance is expanding or contracting in continuous-- or discrete-- time.
We present several examples to illustrate these theoretical contributions.

\section{Notation}

Given a collection of sets $\set{S_\alpha}_{\alpha\in A}$ indexed by $A$, the \emph{disjoint union} of the collection is defined $\coprod_{\alpha\in A} S_\alpha=\cup_{\alpha\in A}(\{\alpha\}\times S_\alpha)$. 
Given $(a,x)\in\coprod_{\alpha\in A}S_\alpha$, we will simply write $x\in\coprod_{\alpha\in A}S_\alpha$ when $a$ is clear from context. 
For a function $\gamma$ with scalar argument, we denote limits from the left and right by $\gamma(t^-)=\lim_{\sigma\uparrow t}\gamma(\sigma)$ and $\gamma(t^+)=\lim_{\sigma\downarrow t}\gamma(\sigma)$.
Given a smooth function $f : X \times Y \into Z$, 
we let 
$D_xf : TX \times Y \into TZ$
denote the 
\emph{derivative of $f$ with respect to $x\in X$}
and 
$Df = (D_x f, D_y f) : TX \times TY \into TZ$ 
denote the derivative of $f$ with respect to both $x\in X$ and $y\in Y$.
Here, $TX$ denotes the \emph{tangent bundle} of $X$; when $X\subset\Rbb^d$ we have $TX = X\times\Rbb^d$.
The \emph{induced norm} of a linear function $M:\Rbb^{n_j}\into\Rbb^{n_{j'}}$~is 
\begin{align}
  \label{eq:46}
  \|M\|_{j,j'}=\sup_{x\in \mathbb{R}^{n_{j'}}}\frac{|Mx|_{j}}{|x|_{j'}}
\end{align}
where $|\cdot|_j$ and $|\cdot|_{j'}$ denote the vector norms on $\mathbb{R}^{n_j}$ and $\mathbb{R}^{n_{j'}}$, respectively; 
when the norms are clear from context, we omit the subscripts. 
The \emph{matrix measure} of $A\in\Rbb^{n\times n}$, denoted $\mu(A)$, is 
\begin{align}
  \label{eq:51}
  \mu(A)=\lim_{h\downarrow 0}\frac{(\|I+hA\|-1)}{h}.
\end{align}

\section{Preliminaries}
A \emph{hybrid system} is a tuple $\H=(\D,\F,\G,\R)$ where:
\begin{itemize}
\item[$\D$] $=\coprod_{j\in \J}\D_j$ 
  is a set of states
  where 
  $\J$ is a finite index set 
  and 
  $\D_j=\mathbb{R}^{n_j}$ is equipped with a norm $|\cdot|_j$ for some $n_j\in\mathbb{N}$ in each domain $j\in\J$;
\item[$\F$] $:[0,\infty)\times\D\to T\D$ is a vector field that we interpret as 
$\F_j=\left.\F\right|_{\D_j}:[0,\infty)\times \D_j\to\mathbb{R}^{n_j}$ for each $j\in\J$;
\item[$\G$] $=\coprod_{j\in \J} \G_j$ is a guard set with $\G_j\subset \D_j$ for all $j\in\J$;
\item[$\R$] $:\G\to\D$ is a reset map.
\end{itemize}
We have assumed that $\D_j=\mathbb{R}^{n_j}$ for all $j\in\J$ for ease of exposition. 
In practice, the domains of the hybrid system may be restricted to subsets of Euclidean space, as in the examples below. 
In full generality, hybrid systems exhibit a wide range of behaviors;
our theoretical results require that we restrict the class of hybrid systems under consideration to those satisfying the following assumptions.
First, we assume that the guard does not intersect the image of the reset to preclude multiple simultaneous discrete transitions.
\begin{assum}[isolated discrete transitions]  
\label{assum:reset}
$  \R(\G)\cap\G = \emptyset.$
\end{assum}

Informally, an {execution} of a hybrid system is a 
right--continuous function of time that satisfies the continuous--time dynamics specified by $\F$ and the discrete--time dynamics specified by $\G$ and $\R$;
Formally, a function $\chi:[\tau,\infty)\to \D$ with $\tau\geq 0$ is an \emph{execution} of the hybrid system if:
\begin{enumerate}
\item $  D\chi(t)=\F(t,\chi(t))$ for almost all $t\geq \tau$;
\item $\chi(t^+)=\chi(t)$ for all $t\geq \tau$;
\item $\chi(t^-)= \chi(t)$ if and only if $\chi(t)\not\in \G$;
\item Whenever $\chi(t^-)\neq \chi(t)$, then $\chi(t^-)\in\G$ and $\chi(t)=\R(\chi(t^-))$.
\end{enumerate}

\begin{assum}[existence and uniqueness]
\label{assum:unique}
For any initial condition $x\in \D\backslash \G$ and any initial time $\tau\geq 0$, there exists a unique execution $\chi:[\tau,\infty)\into\Rbb$ satisfying $\chi(\tau)=x$; for $x\in \G$, we consider the execution from $x$ to be the unique execution initialized at $\R(x)\in\D\sm\G$.
\end{assum}

\noindent
Assumption \ref{assum:unique} implies that for each $x\in\D$ and all $\tau\geq 0$ there exists exactly one execution $\chi$ initialized at $\chi(x) = \tau$ that exists for all time $t \ge \tau$, 
that is, 
the hybrid system is 
deterministic,
nonblocking,
and does not exhibit finite escape time. 
We denote this unique execution as $\phi(t,\tau,x)=\chi(t)$; 
 the function $\phi$ defined in this way is the \emph{flow} of the hybrid system;
we adopt the convention that $\phi(\tau^-,\tau,x)=x$ for all $x\in \D$ and all $\tau\in\mathbb{R}$. 
If $\phi(\sigma,\tau,x)\in \D_j$ for all $\sigma\in[\tau,t]$
for some $t\geq \tau$, $j\in\J$, and $x\in\D_j$, 
we write $\phi_j(t,\tau,x)=\phi(t,\tau,x)$ to emphasize that the execution restricted to $[\tau,t]$ lies entirely within $\D_j$. 

\begin{assum}[smooth vector field]
\label{assump:vf}
For all $j\in \J$, $\F_j$ is a smooth vector field.
\end{assum}

\begin{assum}[no Zeno executions]
\label{assum:exec}
No execution undergoes an infinite number of resets in finite time.
\end{assum}

\noindent
We define $\G_{j,j'}=\R^{-1} (\D_{j'})\cap \G_j$ 
for each $j,j'\in\J$ 
and make the following assumptions on guards and resets.
\begin{assum}[differentiability of guards and resets]
\label{assump:guard}
For each $j,j'\in\J$, whenever $\G_{j,j'}\neq \emptyset$, there exists continuously differentiable and nondegenerate%
\footnote{i.e. $Dg_{j,j'}(x) \ne 0$ for all $x\in\D_j$}
$g_{j,j'}:\D_j\to \mathbb{R}$ such that $\G_{j,j'}\subseteq \{x\in \D_j: g_{j,j'}(x)\leq 0\}\subseteq \G_{j}$. Further, there exists continuously differentiable $\R_{j,j'}: \{x\in \D_j: g_{j,j'}(x)\leq 0\}\to \D_{j'}$ such that $\left.\R_{j,j'}\right|_{\G_{j,j'}}=\left. \R \right|_{\G_{j,j'}}$.
\end{assum}

\noindent
In each domain $j\in\J$, we do not allow executions to graze the guard set $\G_j$ and thus impose a transversality property on the vector field $\F_j$. 
\begin{assum} [vector field transverse to guard]
\label{assump:transverse}
  For all $j,j'\in\J$, $t\geq 0$, and $x\in \G_{j,j'}$:
  \begin{equation}
    \label{eq:21}
  Dg_{j,j'}(x)\cdot \F_j(t,x)< 0.
  \end{equation}
\end{assum}

\noindent
The reset map $\R$ induces an equivalence relation $\Rsim$ on $\D$ defined as 
the smallest equivalence relation containing $\{(a,b)\in \G \times \D:\R(a)=b\}\subset \D\times \D$, for which we write $a \Rsim b$ to indicate $a$ and $b$ are related. The equivalence class for $x\in\D$ is defined as $[x]_{\R}=\{y\in \D|x\Rsim y\}$. The \emph{quotient space} induced by the equivalence relation is denoted
\begin{align}
  \label{eq:18}
  \mcM=\{[x]_{\R}|x\in \D\}
\end{align}
endowed with the quotient topology~\cite[Appendix~A]{Lee2012-mb}.

We now consider paths that will be used to define a distance function on the quotient $\mcM$. 
To that end, a \emph{countable partition} of the interval $[0,1]$ is a countable collection $\{r_i\}_{i=0}^k\subset [0,1]$ with possibly $k=\infty$ satisfying $0=r_0\leq r_1\leq r_2\leq \cdots$ and $r_k=1$ if $k$ is finite or $\lim_{i\to \infty}r_i=1$ if $k=\infty$. A path $\gamma:[0,1]\to \D$ is \emph{$\R$-connected} if 
there exists a countable partition $\{r_i\}_{i=0}^k$ of $[0,1]$ such that $\left.\gamma\right|_{[r_i,r_{i+1})}$ is continuous for each $i\in\{0,1,\ldots,k-2\}$ and $\left. \gamma \right |_{[r_{k-1},r_k]}$ is continuous whenever $k$ is finite, and additionally $\lim_{r\uparrow r_{i}} \gamma(r) \Rsim \gamma(r_{i+1})$ for all $i\in\{1,\ldots,k-1\}$. 

Because each section $\left.\gamma\right|_{[r_i,r_{i+1})}$ is continuous, it must necessarily belong to a single $\D_j$ for $j\in\J$.
We further say that $\gamma$ is \emph{piecewise--differentiable} if each section $\left.\gamma\right|_{[r_i,r_{i+1})}$ is piecewise--differentiable. Intuitively, an $\R$-connected path $\gamma$ is a path through the domains $\{\D_j\}_{j\in \J}$ of the hybrid system that jumps through the reset map $\R$ (forward or backward) a countable number of times. With a slight abuse of notation,%
\footnote{Formally, $\pi\circ\gamma$ is a path in $\mcM$, where $\pi:\D\to\mcM$ is the \emph{quotient projection}.}
we consider $\gamma$ a path in $\mcM$. With this identification, all $\R$-connected paths are (more precisely:  descend to) continuous paths in the quotient space $\mcM$.

The length of a piecewise--differentiable path $\gamma_j:[0,1]\to \D_j$ completely contained in a domain $\D_j$ is computed in the usual way using the norm $|\cdot|_j$ in $\D_j$: $L_j(\gamma_j)=\int_0^1|D\gamma_j(r)|_jdr$. We drop the subscript for $L$ when the domain is clear from context and instead write $L(\gamma_j)$. 

Let $\Gamma$ denote the set of  $\R$-connected and piecewise--differentiable paths in $\mcM$, and let 
\begin{align}
  \label{eq:31}
  \Gamma(a,b)=\{\gamma\in\Gamma:\gamma(0)=a\text{ and }\gamma(1)=b,\ a,b\in\D\}.
\end{align}
We use the norm--induced length of each domain to define a \emph{length structure}~\cite[Ch.~2]{Burago2001-sj} on $\mcM$ from which we derive a distance metric.  To that end, for $\gamma\in\Gamma$, let $\{r_i\}_{i=0}^k$ be a countable partition of $[0,1]$ such that $\left.\gamma\right|_{[r_i,r_{i+1})}$ is piecewise--differentiable for all $i\in\{0,1,\ldots,k-1\}$. 
With the length of $\gamma$ defined as
\begin{align}
  \label{eq:19}
  L(\gamma)=\sum_{i=1}^kL(\left.\gamma\right|_{[r_i,r_{i+1})}),
\end{align}
the function $d:\mcM\times \mcM\to \mathbb{R}_{\geq 0}$ defined by
\begin{align}
  \label{eq:20}
  d(x,y)=\inf_{\gamma\in\Gamma(x,y)}L(\gamma)
\end{align}
is a distance metric on $\mcM$ compatible with the quotient topology~\cite[Thm.~13]{Burden2015-ip}.

The final required technical assumption is closely related to continuity of $\phi$ with respect to initial conditions $x$, as claimed in Proposition \ref{prop:cont} below (some proofs omitted due to page constraints).

\begin{assum}[forward--invariance of $\R$-connected paths]
\label{assum:Rconnect}
For all $t\geq \tau$ and all piecewise--differentiable $\R$-connected paths $\gamma\in\Gamma $, $\phi(t,\tau,\gamma(r))$, interpreted as a function of $r$, is a piecewise--differentiable $\R$-connected path.
\end{assum}

\begin{prop}
\label{prop:cont}
Under Assumptions~1--\ref{assum:Rconnect},
the flow $\phi(t,\tau,x)$ varies continuously with respect to $x$ for all $t\geq \tau$ such that $\phi(t^-,\tau,x)\not\in \G$.
\end{prop}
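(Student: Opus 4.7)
The plan is to reduce, via Assumption \ref{assum:exec}, to a finite number of resets in $[\tau,t]$; handle each reset via the implicit function theorem using transversality (Assumption \ref{assump:transverse}); interleave classical continuous dependence for smooth ODEs (Assumption \ref{assump:vf}) between resets; and then lift the resulting pointwise continuity within each $\D_j$ to continuity in the metric $d$ on $\mcM$ using Assumption \ref{assum:Rconnect}.

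Concretely, fix $x\in\D_{j_0}$ and $t\geq\tau$ with $\phi(t^-,\tau,x)\notin\G$. By Assumption \ref{assum:exec}, the execution from $x$ undergoes finitely many resets $\tau<t_1<\cdots<t_N<t$, with $\phi(t_i^-,\tau,x)\in\G_{j_{i-1},j_i}$ for a sequence $j_0,j_1,\ldots,j_N\in\J$ (strict inequality $t_N<t$ is ensured by $\phi(t^-,\tau,x)\notin\G$). I argue by induction on $N$. The base case $N=0$ is the standard theorem on continuous dependence for the smooth ODE $\dot\chi=\F_{j_0}(s,\chi)$ on $[\tau,t]$. For the inductive step, Assumption \ref{assump:transverse} yields
\begin{equation*}
\left.\tfrac{d}{ds}g_{j_0,j_1}\paren{\phi_{j_0}(s,\tau,x)}\right|_{s=t_1}=Dg_{j_0,j_1}\paren{\phi_{j_0}(t_1^-,\tau,x)}\cdot\F_{j_0}\paren{t_1,\phi_{j_0}(t_1^-,\tau,x)}<0,
\end{equation*}
so the implicit function theorem yields a continuous map $y\mapsto t_1(y)$ on a neighborhood of $x$ in $\D_{j_0}$ with $t_1(x)=t_1$ and $g_{j_0,j_1}\paren{\phi_{j_0}(t_1(y),\tau,y)}=0$. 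Continuous differentiability of $\R_{j_0,j_1}$ (Assumption \ref{assump:guard}) then makes $y\mapsto x'(y):=\R_{j_0,j_1}\paren{\phi_{j_0}(t_1(y),\tau,y)}$ continuous; Assumption \ref{assum:reset} ensures $x'(y)\notin\G$, so the inductive hypothesis applied to the execution starting at $x'(y)$ at time $t_1(y)$, which has only $N-1$ resets remaining before $t$, completes the argument for continuity with respect to the norm $|\cdot|_{j_0}$.

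The hard part will be lifting this pointwise argument, which concerns only perturbations $y$ lying in the same domain $\D_{j_0}$ as $x$, to continuity with respect to the metric $d$ on $\mcM$, since points close to $x$ in $d$ may lie in different domains entirely. This is exactly where Assumption \ref{assum:Rconnect} enters: given any $\R$-connected path $\gamma\in\Gamma(x,y)$ of small length, the pushforward $r\mapsto\phi(t,\tau,\gamma(r))$ is itself a piecewise-differentiable $\R$-connected path from $\phi(t,\tau,x)$ to $\phi(t,\tau,y)$, so $d\paren{\phi(t,\tau,x),\phi(t,\tau,y)}\leq L\paren{\phi(t,\tau,\gamma(\cdot))}$. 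Combining the finite-composition estimate from the induction with a Gronwall-type bound on how each continuous-time segment stretches lengths (governed by $D\F_{j_i}$) together with the operator norm of $D\R_{j_{i-1},j_i}$ controlling the stretching across each reset yields a uniform bound $L\paren{\phi(t,\tau,\gamma(\cdot))}\leq C\cdot L(\gamma)$ for short $\gamma$, delivering the desired $d$-continuity.
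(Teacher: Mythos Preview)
The paper does not actually supply a proof of Proposition~\ref{prop:cont}; it explicitly notes that ``some proofs [are] omitted due to page constraints,'' and this is one of them. So there is no authors' proof to compare against directly.

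Your approach is sound and is precisely what the paper's machinery suggests. The induction on the (finite, by Assumption~\ref{assum:exec}) number of resets, with each reset time obtained as a $C^1$ function of the initial condition via the implicit function theorem and Assumption~\ref{assump:transverse}, and with classical continuous dependence on each smooth segment, is the standard argument. Your lift to continuity in the intrinsic metric $d$ via Assumption~\ref{assum:Rconnect}---pushing forward a short $\R$-connected path and bounding its length---is exactly the mechanism the paper uses in the proof of Theorem~\ref{thm:main}, so your sketch is consistent with the paper's framework even though Proposition~\ref{prop:cont} itself is left unproved.

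One imprecision worth flagging: in the last sentence you attribute the stretching of $w(t,r)=D_r\psi(t,r)$ across a reset to ``the operator norm of $D\R_{j_{i-1},j_i}$.'' That is not quite right; as Proposition~\ref{prop:salt} records, the jump in $w$ is governed by the saltation matrix $\Xi$ of~\eqref{eq:saltmat}, which depends on $D\R$, $Dg$, and the vector fields on both sides of the reset. For the purpose of proving continuity (as opposed to contraction), you only need that $\Xi$ is locally bounded---which follows from the smoothness and transversality assumptions---so the conclusion stands, but the bound should be phrased in terms of $\|\Xi\|$, not $\|D\R\|$. You should also make explicit that, for sufficiently short $\gamma$ and for $r$ in a full-measure subset of $[0,1]$, the execution $\phi(\cdot,\tau,\gamma(r))$ visits the same finite sequence of domains as $\phi(\cdot,\tau,x)$ on $[\tau,t]$ (a consequence of your inductive step), so that the number of saltation factors entering the bound on $|w(t,r)|$ is uniformly $N$ and the constant $C$ is well defined.
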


\noindent
It is well--known~\cite{Aizerman1958-ih} that Assumptions~1--\ref{assum:Rconnect} together ensure that the flow $\phi$ is differentiable almost everywhere and, moreover, its derivative can be computed by solving a jump--linear--time--varying differential equation as in the following Proposition.
\begin{prop}
\label{prop:salt}
Under Assumptions~1--\ref{assum:Rconnect},
given an initial time $\tau\geq 0$ 
and 
a piecewise--differentiable $\R$-connected path
$\gamma\in\Gamma$, 
let $\psi(t,r)=\phi(t,\tau,\gamma(r))$ for all $t\ge \tau$  
and define
\begin{align}
  \label{eq:24}
  w(t,r)=D_r\psi(t,r)
\end{align}
whenever the derivative exists. 
Then 
$w(\tau^-,r)=D_r\gamma(r)$ 
and 
$w(\cdot,r)$ satisfies a linear--time--varying differential equation
\begin{align}
\label{eq:variational}
  D_t{w}(t,r)=D_x \F(t,\psi(t,r))w(t,r),\ \psi(t^-,r)\in \D\backslash\G,
\end{align}
with jumps 
\begin{align}
\label{eq:saltation}
  w(t,r)=\Xi(t,\psi(t^-,r)) w(t^-,r),\ \psi(t^-,r)\in \G,
\end{align}
where $\Xi(t,x)$ is a \emph{saltation matrix} given by
\begin{align}
\nonumber\Xi(t,x)=& \frac{\left(\F_{j'}(t,\R(x))-D\R(x)\cdot \F_{j}(t,x)\right)\cdot Dg_{j,j'} (x)}{Dg_{j,j'}(x)\cdot \F_{j}(t,x)}\\
\label{eq:saltmat}&+D\R(x)
\end{align}
for all $t\geq 0$ and all $x\in \G_{j,j'} = \G{_j}\cap\R^{-1}(\D_{j'})$.

\end{prop}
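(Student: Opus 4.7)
The plan is to split the proof into a routine smooth part and a jump part, handling each interval between resets separately and then iterating using \cref{assum:exec}.

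First I would establish the variational \eqref{eq:variational} on a maximal interval $[\tau,t_1)$ on which $\psi(\cdot,r)$ remains in a single domain $\D_j$. On such an interval $\psi(t,r)=\phi_j(t,\tau,\gamma(r))$ satisfies $D_t\psi(t,r)=\F_j(t,\psi(t,r))$; by \cref{assump:vf}, $\F_j$ is smooth, so standard smooth-ODE theory gives $C^1$ dependence on the parameter $r$, and differentiating both sides in $r$ and exchanging $D_t$ with $D_r$ produces \eqref{eq:variational}. The initial condition $w(\tau^-,r)=D_r\gamma(r)$ comes from the convention $\phi(\tau^-,\tau,x)=x$, so $\psi(\tau^-,r)=\gamma(r)$.

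The main obstacle is the jump formula \eqref{eq:saltation}. Fix $r_0$ with $\psi(t_1^-,r_0)\in\G_{j,j'}$, set $x^-_0=\psi(t_1^-,r_0)$, and denote the pre-reset continuous extension by $\xi(t,r)=\phi_j(t,\tau,\gamma(r))$. \Cref{assum:Rconnect} together with continuity of $\xi$ and transversality \eqref{eq:21} let me apply the implicit function theorem to $g_{j,j'}(\xi(t,r))=0$ at $(t_1,r_0)$: there is a $C^1$ crossing-time function $t^*(r)$ with
\begin{equation*}
D_r t^*(r)=-\frac{Dg_{j,j'}(x^-(r))\cdot w^-(r)}{Dg_{j,j'}(x^-(r))\cdot\F_j(t^*(r),x^-(r))},
\end{equation*}
where $x^-(r)=\xi(t^*(r),r)$ and $w^-(r)=D_r\xi(t^*(r),r)=w(t^*(r)^-,r)$. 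Transversality guarantees the denominator is nonzero (in fact negative), so $t^*$ is well defined and differentiable.

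Next I would compute $w(t^*(r)^+,r)$. Writing $x^+(r)=\R_{j,j'}(x^-(r))$ and $\eta(t,r)=\phi_{j'}(t,t^*(r),x^+(r))$, the hybrid flow agrees with $\eta$ for $t>t^*(r)$ by uniqueness (\cref{assum:unique}) and \cref{assump:guard}. Differentiating $\eta$ in $r$ at $t=t^*(r)^+$ using the identities $\partial_x\phi_{j'}(\tau,\tau,\wc)=I$ and $\partial_\tau\phi_{j'}(t,\tau,x)\big|_{t=\tau}=-\F_{j'}(\tau,x)$ yields
\begin{equation*}
w(t^*(r)^+,r)=-\F_{j'}(t^*(r),x^+(r))\,D_r t^*(r)+D\R(x^-(r))\,D_r x^-(r),
\end{equation*}
and expanding $D_r x^-(r)=\F_j(t^*(r),x^-(r))D_r t^*(r)+w^-(r)$ and substituting the expression for $D_r t^*(r)$ collects terms into exactly \eqref{eq:saltmat} acting on $w^-(r)$.

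Finally I would iterate: \cref{assum:exec} guarantees only finitely many resets on any finite interval, so the argument concatenates across successive reset times $t_1<t_2<\cdots$ and establishes \eqref{eq:variational}--\eqref{eq:saltation} globally almost everywhere. The delicate point throughout is justifying that $t^*(r)$ is well defined and $C^1$ near $r_0$; this is exactly where \cref{assum:Rconnect} (local regularity of the flow as a function of $r$), \cref{assump:guard} (smoothness of $g_{j,j'}$ and $\R_{j,j'}$), and \cref{assump:transverse} (nondegeneracy in the implicit function theorem) are each invoked.
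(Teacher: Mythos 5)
Your proposal is correct, and it is essentially the argument the paper relies on: the paper omits this proof, deferring to the classical reference for the saltation matrix, and your derivation is exactly that standard one. The two key steps — the implicit-function-theorem computation of the crossing time $t^*(r)$ (valid by transversality, which makes the denominator strictly negative) and the expansion of $D_r\bigl[\phi_{j'}(t,t^*(r),\R(x^-(r)))\bigr]$ as $t\downarrow t^*(r_0)$ using $\partial_x\phi_{j'}(\tau,\tau,\cdot)=I$ and $\partial_\tau\phi_{j'}(t,\tau,x)\big|_{t=\tau}=-\F_{j'}(\tau,x)$ — combine, after substituting $D_r x^-=\F_j\,D_rt^*+w^-$, into precisely the matrix in \eqref{eq:saltmat}, and the finite-resets assumption lets you concatenate across crossings.
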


\section{Main result}
The main contribution of this paper is to provide local conditions under which the distance between any pair of trajectories in a hybrid system
(as measured by the intrinsic metric defined in \eqref{eq:20})
is globally bounded by an exponential envelope. 
These conditions are made precise in Theorem \ref{thm:main} and Corollary \ref{cor:contract}. In the case when the system satisfies a continuous contraction condition within each domain of the hybrid system as well as a discrete nonexpansion condition through the reset map between domains, this exponential envelope is decreasing in time so that the intrinsic distance between trajectories decreases exponentially in time, i.e., the system is contractive.

\begin{example}
\label{ex:1}
Consider a hybrid system with two domains in the positive orthant of the plane so that $\D=\D_{L}\coprod\D_{R}$ with $\D_{L}=\D_{R}=\{x\in\mathbb{R}^2|x_1\geq 0\text{ and }x_2\geq 0\}$, and further take $g _{R,L} (x)=x_1-1$ and $g _{L,R} (x)=1-x_1$ so that the system is in the left (resp., right) domain $\D_{L}$ (resp., $\D_R$) when $x_1< 1$ (resp., $x_1>1$). Assume the reset map $\R$ is the identity map and $\dot{x}=\F_j(x)=A_jx$ for $j\in\{L,R\}$ with
\begin{align}
  \label{eq:54}
  A_j=
  \begin{bmatrix}
    -a_j&0\\
0&-b_j
  \end{bmatrix},\quad a_j,b_j>0\quad \text{for }j\in\{L,R\}.
\end{align}
All executions initialized in $\D$ flow to $\D_L$ and converge to the origin.  
Equip both domains with the standard Euclidean 2--norm so that $|x|_L=|x|_R=|x|_2$ and consider two executions $x(t)=\phi(t,0,\xi)$, $z(t)=\phi(t,0,\zeta)$ with initial conditions $\xi,\zeta\in\D$. 
Then $d(x(t),z(t))=|x(t)-z(t)|_2=|e(t)|_2$ for $e(t)=x(t)-z(t)$. When both executions are in the same domain so that $x,z\in\D_j$ for some $j\in\{L,R\}$, the error dynamics obey the dynamics of that domain. It therefore follows that $D_td(x,z)\leq \max\{-a_j,-b_j\} d(x,z)$
so that the distance decreases at exponential rate $\max\{-a_j,-b_j\} $.

Now suppose $x$ and $z$ are in different domains at some time $t$ and, without loss of generality, assume $x\in \D_L$ and $z\in \D_R$. 
Writing
\begin{align}
  \label{eq:1}
  x=\begin{bmatrix}
1-\epsilon_L\\
x_2
\end{bmatrix},\quad z=
  \begin{bmatrix}
    1+\epsilon_R\\
x_2+\delta
  \end{bmatrix}
\end{align}
for some $\epsilon_L,\epsilon_R>0$ and $\delta\in\mathbb{R}$, we have
\begin{align}
  \label{eq:3}
D_t(d(x,z)^2)&= D_t((x-z)^T(x-z))\\
\nonumber&=2(\epsilon_L+\epsilon_R)(a_L-a_R)+2\delta x_2(b_1-b_2)\\
&\quad +\text{H.O.T.}
\end{align}
where the higher order terms H.O.T. are quadratic in $\epsilon_L$, $\epsilon_R$, and $\delta$. Then $D_t(d(x,z)^2)<0$  for all $x_2\geq 0$ and all sufficiently small $\epsilon_L>0$, $\epsilon_R>0$, $\delta\in\mathbb{R}$ if and only if $a_L<a_R$ and $b_L=b_R$. In other words, contraction between any two arbitrarily close executions transitioning from $\D_R$ to $\D_L$ occurs only if executions ``slow down'' in the direction normal to the guard surface when transitioning domains, and the dynamics orthogonal to the guard are unaffected. This example is illustrated in Figure \ref{fig:ex1}.

\end{example}

\begin{figure}
  \centering
{\footnotesize
    \begin{tikzpicture}
  \begin{axis}[width=2.2in,
    height=1.5in,
   axis y line=left,
   axis x line=bottom,
every axis x label/.style={at={(rel axis cs:1,0)},anchor=west},
every axis y label/.style={at={(rel axis cs:0,1)},anchor=south},
   xmin=.5, xmax=1.5, ymax=1,ymin=0, 
   ytick=\empty,
   xtick={.5,1},
   xticklabels={$1-\epsilon$,1},
    xlabel={$x_1$},
    ylabel={$x_2$},
    clip=true,
    at={(0,0)},
    ]
    \draw[dotted] (1,0) -- (1,.8);
    \node at (.8,.1) {$\D_L$};
    \node at (1.2,.1) {$\D_R$};
    \node[circle, fill=black, draw=black,inner sep=1pt] (a) at (.7,.6) {};
    \node[circle, fill=black, draw=black,inner sep=1pt] (b) at (1.3,.5) {};
    \node[circle, fill=black, draw=black,inner sep=1pt] (a2) at ($(.63, {.41*(exp(1.3*.63)-1)})$) {};
    \node[circle, fill=black, draw=black,inner sep=1pt] (b2) at ($(1.1, {.73*(exp(.4*1.1)-1)})$) {};

    \node[inner sep=1pt,label=90:$x(t)$] (a3) at ($(.77, {.41*(exp(1.3*.77)-1)})$) {};
    \node[,inner sep=1pt,label=90:$z(t)$] (b3) at ($(1.4, {.73*(exp(.4*1.4)-1)})$) {};
    \node[inner sep=1pt] (a4) at ($(.76, {.41*(exp(1.3*.76)-1)})$) {};
    \node[,inner sep=1pt] (b4) at ($(1.39, {.73*(exp(.4*1.39)-1)})$) {};
    \addplot[black,domain=0:.82,dashed] (\x,{.41*(exp(1.3*\x)-1)});
    \addplot[black,domain=0:1.5,dashed] (\x,{.73*(exp(.4*\x)-1)});
    \draw[->,>=latex,line width=1pt] (b) --node[above,pos=.3]{$e(t)$} (a);
    \draw[->,>=latex,line width=1pt] (b2) --node[below=1pt,pos=.8]{$e(t+\tau)$} (a2);
    \draw[->,line width=1pt] (b3)--(b4);
    \draw[->,line width=1pt] (a3)--(a4);
\end{axis}
\end{tikzpicture}
}
\caption{An illustration of two executions $x(t)$ and $z(t)$ of the hybrid system in Example \ref{ex:1} in different domains $\D_L$ and $\D_R$. The distance between executions is the Euclidean length of $e(t)=x(t)-z(t)$. When $x(t)$ and $z(t)$ are close, $|e(t)|$ decreases over a short time window $[t,t+\tau]$ if and only if $a_L<a_R$ and $b_L = b_R$, that is, the horizontal component of $x$ decreases at a slower rate than the horizontal component of $z$ and the rates of change of the vertical components are equal.}
  \label{fig:ex1}
\end{figure}
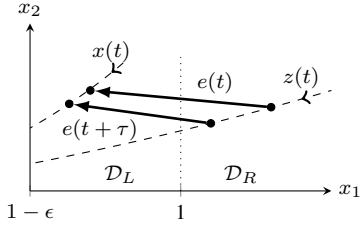

We now generalize the intuition of Example~\ref{ex:1}.
\begin{thm}
\label{thm:main}
Under Assumptions~1--\ref{assum:Rconnect},
if there exists $c\in\mathbb{R}$ such that
  \begin{align}
    \label{eq:thm:mu}
\mu_j\left(D_x \F_j(t,x)\right)\leq c
  \end{align}
for all $j\in \J$, $x\in\D_j\backslash \G_j$, $t\geq 0$,
and
  \begin{align}
    \label{eq:6}
\|\Xi(t,x)\|_{j,j'}\leq 1
  \end{align}
for all $j\in\J$, $x\in\G_{j,j'}$, $t\geq 0$, 
then
  \begin{align}
    \label{eq:thm:Xi}
 d( \phi(t,0,\xi),\phi(t,0,\zeta))\leq e^{ct} d(\xi,\zeta)
  \end{align}
for all $t\geq 0$ and $\xi,\zeta\in \D$.
\end{thm}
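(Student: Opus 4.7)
The plan is to prove the bound on the distance metric by bounding the length of evolved paths pointwise in the parameter $r$, then taking the infimum. Specifically, fix $\xi, \zeta \in \D$, let $\gamma \in \Gamma(\xi,\zeta)$ be any $\R$-connected piecewise--differentiable path, and define $\psi(t,r) = \phi(t,0,\gamma(r))$. By Assumption~\ref{assum:Rconnect}, $\psi(t,\wc)$ is again an $\R$-connected piecewise--differentiable path, belonging to $\Gamma(\phi(t,0,\xi), \phi(t,0,\zeta))$, so $d(\phi(t,0,\xi),\phi(t,0,\zeta)) \le L(\psi(t,\wc))$. The key reduction is then to show $L(\psi(t,\wc)) \le e^{ct} L(\gamma)$, from which the claimed bound follows by taking infimum over $\gamma$ and invoking the definition \eqref{eq:20}.

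To bound $L(\psi(t,\wc))$, I would use Proposition~\ref{prop:salt} to express $L(\psi(t,\wc)) = \int_0^1 |w(t,r)|_{j(t,r)}\,dr$, where $j(t,r)$ indexes the domain containing $\psi(t,r)$ and $w(t,r) = D_r \psi(t,r)$ satisfies the variational equation \eqref{eq:variational} between resets and the jump condition \eqref{eq:saltation} at resets. The first step is the standard continuous--time matrix measure argument: from \eqref{eq:variational} and the defining property of $\mu$, for any $r$ such that $\psi(t^-,r)\notin \G$,
\begin{equation*}
D_t |w(t,r)|_{j(t,r)} \le \mu_{j(t,r)}(D_x \F_{j(t,r)}(t,\psi(t,r)))\,|w(t,r)|_{j(t,r)} \le c\,|w(t,r)|_{j(t,r)},
\end{equation*}
so Gr\"onwall's inequality yields $|w(t,r)|_{j(t,r)} \le e^{c(t-s)}|w(s,r)|_{j(s,r)}$ between any two reset times for that particular $r$. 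The second step handles a reset time $t$ for parameter $r$: by \eqref{eq:saltation} and hypothesis \eqref{eq:6},
\begin{equation*}
|w(t,r)|_{j'} = |\Xi(t,\psi(t^-,r))\,w(t^-,r)|_{j'} \le \|\Xi(t,\psi(t^-,r))\|_{j,j'}\,|w(t^-,r)|_j \le |w(t^-,r)|_j,
\end{equation*}
so the pointwise integrand $|w(t,r)|_{j(t,r)}$ does not increase across a reset. Combining the continuous growth bound and the discrete nonexpansion across each of the (at most countably many, by Assumption~\ref{assum:exec}) reset times experienced by parameter $r$ on $[0,t]$ gives $|w(t,r)|_{j(t,r)} \le e^{ct}\,|D_r \gamma(r)|_{j(0,r)}$ for almost every $r$.

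Integrating this pointwise inequality over $r\in[0,1]$ yields $L(\psi(t,\wc)) \le e^{ct} L(\gamma)$, and taking the infimum over $\gamma \in \Gamma(\xi,\zeta)$ gives $d(\phi(t,0,\xi),\phi(t,0,\zeta)) \le e^{ct} d(\xi,\zeta)$, as desired.

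The main obstacle is bookkeeping the reset times cleanly: for fixed $r$, the times at which $\psi(\wc,r)$ hits $\G$ depend on $r$, so the partition of $[0,1]$ used to compute $L(\psi(t,\wc))$ evolves with $t$ and may differ from the original partition of $\gamma$. Assumption~\ref{assum:Rconnect} is exactly what guarantees that at each $t$ the evolved curve remains $\R$-connected with some (possibly new) countable partition, so the length formula \eqref{eq:19} still applies; Assumption~\ref{assum:exec} ensures only countably many reset events accumulate along each trajectory so that the pointwise argument is well posed. The contributions of the reset instants to $L(\psi(t,\wc))$ are zero (the jumps are identified via $\Rsim$ in $\mcM$), which is why only the continuous pieces enter the integral and the saltation bound acts only as a nonexpansion of the integrand across the jump.
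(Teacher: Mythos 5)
Your proposal is correct and follows essentially the same route as the paper's proof: evolve a near-optimal (or arbitrary) path under the flow, use Proposition~\ref{prop:salt} with Coppel's inequality and the saltation bound to control $|w(t,r)|$ pointwise in $r$, integrate to bound $L(\psi(t,\wc))$, and conclude via \eqref{eq:20}. The only cosmetic difference is that the paper fixes an $\epsilon$-almost-minimizing path up front and lets $\epsilon\to 0$, whereas you take the infimum over all paths at the end; these are equivalent.
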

\begin{proof}
Given $x(0)=\xi$ and $z(0)=\zeta$, for fixed $\epsilon>0$, let $\gamma:[0,1]\to D$ be a piecewise--differentiable $\R$-connected path satisfying $\gamma(0)=\xi$, $\gamma(1)=\zeta$, and  $L(\gamma)<d(\xi,\zeta)+\epsilon$, and let $\psi(t,r)=\phi(t,0,\gamma(r))$. Since $\phi(t,0,\cdot)$ is piecewise--differentiable, it follows from Assumption \ref{assum:Rconnect} that $\psi(t,\cdot)$ is a piecewise--differentiable $\R$-connected path for all $t\geq 0$. Let $w(t,r)=D_r\psi(t,r)$ whenever the derivative exists. By Proposition 1, $w(t,r)$ satisfies the jump--linear--time-varying equations
\begin{align}
\label{eq:8}
  \dot{w}(t,r)=\frac{\partial f}{\partial x}(t,\psi(t,r))w(t,r),\ \psi(t,r)\in\D\sm\G,
\end{align}
\begin{align}
\label{eq:7}
  w(t^+,r)=\Xi(t,\psi(t^-,r))w(t^-,r),\ \psi(t^-,r)\in \G.
\end{align}

We claim that 
\begin{align}
\label{eq:13}
  |w(t,r)|\leq e^{ct}|w(0^-,r)|
\end{align}
for all $t\geq 0$ and for all $r\in[0,1]$ whenever $w(t,r)$ exists. To prove the claim, for fixed $r$, let $\{t_i\}_{i=1}^k\subset [0,\infty)$ with $t_0\leq t_1\leq \cdots$ and  possibly $k=\infty$ be the set of times at which the hybrid execution $\phi(t,0,\gamma(r))$ intersects a guard so that $\left.\psi(\cdot,r)\right|_{[t_{i},t_{i+1})}$ is continuous for all $i\in\{0,1,\ldots,k-1\}$ where $t_0=0$ by convention, and, additionally, $\left.\psi(\cdot,r)\right|_{[t_{k},\infty)}$ is continuous if $k<\infty$. Note that if $k=\infty$ then $\lim_{i\to\infty} t_i=\infty$ since Zeno executions are not allowed.
Now consider some fixed time $T>0$. If $k<\infty$ and $t_k\leq T$, let $i=k$; otherwise, let $i$ be such that $t_i\leq T <t_{i+1}$. 
Let $j$ be the active domain of the system during the interval $[t_i,t_{i+1})$, i.e. $\psi(t,r)\in \D_j$ for all $t\in[t_i,t_{i+1})$.  
With $J(t)=D_x\F_j(\psi(t,r))$ for $t\in[t_i,t_{i+1})$  
we have
\begin{align}
\label{eq:4}
  |w(T,r)|&\leq e^{\int_{t_{i}}^T\mu(J(\tau))d\tau}|w(t_{i}^+,r)|\\
\label{eq:4-2} &\leq e^{c(T-t_{i})}|w(t_{i}^+,r)|\\
\label{eq:4-3}&\leq e^{c(T-t_{i})}\|\Xi(t,\psi(t_i^-,r))\||w(t_{i}^-,r)|\\
\label{eq:4-4}&\leq e^{c(T-t_{i})}|w(t_{i}^-,r)|
\end{align}
where \eqref{eq:4} follows by Coppel's inequality applied to \eqref{eq:8},  \eqref{eq:4-2} follows from \eqref{eq:thm:mu}, \eqref{eq:4-3} follows from \eqref{eq:7}, and \eqref{eq:4-4} follows from \eqref{eq:6}. Since \eqref{eq:4}--\eqref{eq:4-4} holds for any $T<t_{i+1}$, we further conclude that $|w(t_{i+1}^-,r) |\leq e^{c(t_{i+1}-t_{i})}|w(t_{i}^-,r)|$ whenever $i\leq k$. Then, by recursion, $  |w(T,r)|\leq e^{cT}|w(0^-,r)|$. Since $T$ was arbitrary,~\eqref{eq:13} holds.

Again fix $T> 0$. Because $\psi(T,\cdot)$ is a piecewise--differentiable $\R$-connected path, there exists a finite collection $\{r_i\}_{i=0}^k\subset [0,1]$ with $0=r_0\leq r_1\leq \ldots \leq r_k=1$ such that $\left.\psi(T,\cdot)\right|_{(r_i,r_{i+1})}$ is piecewise--differentiable for all $i\in\{0,1,\ldots, k-2\}$ and $\left.\psi(T,\cdot)\right|_{(r_k-1,r_{k})}$ is piecewise--differentiable. It follows that
\begin{align}
  \label{eq:10}
  L(\left.\psi(T,\cdot)\right|_{[r_i,r_{i+1})})= \int_{r_{i-1}}^{r_i} |w(T,s)| ds.
\end{align}

Then 
\begin{align}
\label{eq:12}
  L(\psi(T,\cdot))&=\sum_{i=0}^{k-1}   L(\left.\psi(T,\cdot)\right|_{[r_i,r_{i+1})})\\
&\leq \int_0^1|w(T,s)|ds\\
\label{eq:12-2}&\leq e^{cT}\int_0^1|w(0^-,s)|ds\\
\label{eq:12-3}&=e^{cT}L(\gamma)\\
&\leq e^{cT}(1+\epsilon)d(\xi,\zeta)
\end{align}
where \eqref{eq:12-2} follows from \eqref{eq:13}, and \eqref{eq:12-3} follows because $w(0^-,r)=D_r\gamma(r)$. In addition, observe
\begin{align}
  \label{eq:14}
 d( \phi(T,0,\xi),\phi(T,0,\zeta))\leq L(\psi(T,\cdot)).
\end{align}
Since $T$ was arbitrary and $\epsilon$ can be chosen arbitrarily small, \eqref{eq:thm:Xi} holds.
\end{proof}
Suppose that global upper and lower bounds on the \emph{dwell time} between successive resets are known. Then, over a given time window, the number of domain transitions is upper and lower bounded, and the proof of Theorem \ref{thm:main} can be adapted to derive an exponential bound on the intrinsic distance between any pair of trajectories
as in the following Corollary.

\begin{corollary}
\label{cor:contract}
Under Assumptions~1--\ref{assum:Rconnect},
suppose the dwell time between resets is at most $\overline{\tau}\in (0,\infty]$ and at least $\underline{\tau}\in[0,\infty)$,
  \begin{equation}
  \mu_j(D_x\F_j(t,x))\leq c
  \end{equation}
for some $c\in\mathbb{R}$ for all $j\in\J$, $x\in D_j\backslash \G_j$, $t\geq 0$, and
\begin{equation}
  	\|\Xi(t,x)\|_{j,j'}\leq K	
\end{equation}
for some $K\in\mathbb{R}_{\geq 0}$ and all $j\in\J$, $x\in\G_{j,j'}$, $t\geq 0$.
Then
\begin{align}
 d( \phi(t,0,\xi),\phi(t,0,\zeta))\leq \max\{K^{\lceil t/\underline{\tau}\rceil},K^{\lfloor t/\overline{\tau}\rfloor}\}e^{ct} d(\xi,\zeta).
\end{align}
In particular, if $\max\{Ke^{c\underline{\tau}},Ke^{c\overline{\tau}}\}<1$ then 
\begin{equation}
\lim_{t\to\infty}d(\phi(t,0,\xi),\phi(t,0,\zeta))=0
\end{equation}
for all $\xi,\zeta\in\D$.
\end{corollary}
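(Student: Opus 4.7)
The plan is to adapt the proof of Theorem~\ref{thm:main} almost verbatim, replacing the no-cost jump bound $\|\Xi\|\leq 1$ by $\|\Xi\|\leq K$ and carefully counting how many resets can occur in a time window using the dwell-time hypothesis. Concretely, fix $\epsilon>0$, choose an $\R$-connected path $\gamma\in\Gamma(\xi,\zeta)$ with $L(\gamma)<d(\xi,\zeta)+\epsilon$, set $\psi(t,r)=\phi(t,0,\gamma(r))$, and let $w(t,r)=D_r\psi(t,r)$. By Assumption~\ref{assum:Rconnect} and Proposition~\ref{prop:salt}, $\psi(t,\cdot)$ is again an $\R$-connected path for every $t\geq 0$ and $w$ satisfies \eqref{eq:variational}--\eqref{eq:saltation}.

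For fixed $r\in[0,1]$ and $T>0$, let $N(T,r)$ denote the number of reset times of $\psi(\cdot,r)$ in $(0,T]$. Chaining Coppel's inequality on each smooth interval---which multiplies $|w|$ by at most $e^{c\Delta t}$ by~\eqref{eq:thm:mu}---with the jump bound $|w(t^+,r)|\leq K|w(t^-,r)|$ at each reset yields
\begin{equation*}
|w(T,r)|\leq K^{N(T,r)}\,e^{cT}\,|w(0^-,r)|.
\end{equation*}
The dwell-time hypothesis pins down $N(T,r)$: each inter-reset interval has length in $[\underline{\tau},\overline{\tau}]$, so $\lfloor T/\overline{\tau}\rfloor\leq N(T,r)\leq\lceil T/\underline{\tau}\rceil$. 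Because $n\mapsto K^n$ is monotone (increasing when $K\geq 1$, decreasing when $K<1$), its maximum over the admissible range of $N(T,r)$ is $\max\{K^{\lceil T/\underline{\tau}\rceil},\,K^{\lfloor T/\overline{\tau}\rfloor}\}$ in either case. Integrating $|w(T,\cdot)|$ over $[0,1]$, substituting $w(0^-,r)=D_r\gamma(r)$, invoking $d(\phi(T,0,\xi),\phi(T,0,\zeta))\leq L(\psi(T,\cdot))$ exactly as in \eqref{eq:12}--\eqref{eq:14}, and sending $\epsilon\downarrow 0$ delivers the claimed envelope.

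For the asymptotic conclusion, I would factor
\begin{equation*}
K^{\lceil t/\underline{\tau}\rceil}e^{ct}=(Ke^{c\underline{\tau}})^{\lceil t/\underline{\tau}\rceil}\,e^{c(t-\underline{\tau}\lceil t/\underline{\tau}\rceil)},
\end{equation*}
observing that the residual exponent $t-\underline{\tau}\lceil t/\underline{\tau}\rceil$ lies in the bounded interval $(-\underline{\tau},0]$ so the trailing factor is uniformly bounded in $t$; the analogous identity handles the $K^{\lfloor t/\overline{\tau}\rfloor}e^{ct}$ term. Under $\max\{Ke^{c\underline{\tau}},Ke^{c\overline{\tau}}\}<1$ both geometric factors vanish as $t\to\infty$, so their maximum (times the bounded remainder) does as well, driving $d(\phi(t,0,\xi),\phi(t,0,\zeta))$ to zero. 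The main obstacle is not conceptual but bookkeeping: verifying that the max-over-two bound correctly covers both the $K\geq 1$ and $K<1$ regimes, and that the dwell-time hypothesis translates cleanly into the integer reset count $N(T,r)$ used in the iteration above.
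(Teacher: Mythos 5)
Your proof is correct and follows exactly the route the paper indicates (the proof of this corollary is omitted; the text remarks only that the argument of Theorem~\ref{thm:main} can be adapted by counting resets via the dwell-time bounds): the chained estimate $|w(T,r)|\leq K^{N(T,r)}e^{cT}|w(0^-,r)|$ together with $\lfloor T/\overline{\tau}\rfloor\leq N(T,r)\leq\lceil T/\underline{\tau}\rceil$ and monotonicity of $n\mapsto K^n$ is precisely the intended adaptation. Your factorization into $(Ke^{c\underline{\tau}})^{\lceil t/\underline{\tau}\rceil}$ (resp. $(Ke^{c\overline{\tau}})^{\lfloor t/\overline{\tau}\rfloor}$) times a uniformly bounded residual exponential also handles the asymptotic claim correctly.
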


We now address an important special case, namely, when domains have the same dimension, are equipped with the same norm, and resets are simple translations (e.g. identity resets). 
Proposition \ref{prop:ident1} establishes that the induced norm of the saltation matrix is lower bounded by unity; 
In the particular case of the standard Euclidean 2--norm, 
Proposition \ref{prop:ident2} shows that 
the induced norm of the saltation matrix is equal to unity if and only if the difference between the vector field evaluated at $x$ and $\R(x)$ lies in the direction of the gradient of the guard function.

\begin{prop}
\label{prop:ident1}
  Under Assumptions \ref{assum:reset}, \ref{assump:vf}, \ref{assump:guard}, and \ref{assump:transverse}, for some $j,j'\in\J$, suppose $D\R_{j,j'}(x)=I$ for all $x\in \G_{j,j'}$, that is, $\R_{j,j'}$ is a translation. Suppose also that $|\cdot|_j=|\cdot|_{j'}$. Then 
  \begin{align}
    \label{eq:16}
\|\Xi(t,x)\|_{j,j'}\geq 1 \text{ for all }x\in\G_{j,j'}\text{, $t\geq 0$}.
  \end{align}
\end{prop}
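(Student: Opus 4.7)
The plan is to exhibit a vector that is fixed by the saltation matrix, which immediately yields the induced-norm lower bound. Under the hypothesis $D\R_{j,j'}(x)=I$, the saltation matrix \eqref{eq:saltmat} simplifies to
\eqn{
  \Xi(t,x) = I + \frac{v(t,x)\, Dg_{j,j'}(x)}{Dg_{j,j'}(x)\cdot \F_j(t,x)},
}
where $v(t,x) = \F_{j'}(t,\R(x)) - \F_j(t,x) \in \Rbb^{n_{j'}}$ and $Dg_{j,j'}(x)$ is viewed as a row vector in $\Rbb^{1 \times n_j}$. Since $D\R_{j,j'}(x) = I$ forces $n_j = n_{j'} =: n$, this is a well-defined rank-one update of the identity. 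Note that the denominator is nonzero (indeed strictly negative) by the transversality \Cref{assump:transverse}, so $\Xi(t,x)$ is well-defined.

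Next I would exploit the rank-one structure. By the nondegeneracy clause of \Cref{assump:guard}, $Dg_{j,j'}(x)$ is a nonzero row vector, so its kernel $\ker Dg_{j,j'}(x) \subset \Rbb^n$ is an $(n-1)$-dimensional subspace. Assuming $n \geq 2$ (the case $n=1$ is degenerate since then no nonzero vector is tangent to the guard, and the claim is vacuous or trivial after direct calculation), pick any nonzero $u \in \ker Dg_{j,j'}(x)$. Then $Dg_{j,j'}(x)\cdot u = 0$, so the rank-one correction annihilates $u$ and
\eqn{
  \Xi(t,x)\, u = u.
}

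Finally I would invoke the definition of the induced norm \eqref{eq:46} together with the hypothesis $|\cdot|_j = |\cdot|_{j'}$ to conclude
\eqn{
  \|\Xi(t,x)\|_{j,j'} = \sup_{y\neq 0}\frac{|\Xi(t,x) y|_{j'}}{|y|_j} \geq \frac{|\Xi(t,x)u|_{j'}}{|u|_j} = \frac{|u|_{j'}}{|u|_j} = 1.
}

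There is no real technical obstacle here; the proof reduces to the observation that translational resets produce saltation matrices that act as the identity on the $(n-1)$-dimensional tangent space to the guard surface. The only subtle point is the rank-one structure of the non-identity part of $\Xi$, which makes finding a fixed direction automatic once nondegeneracy of $Dg_{j,j'}$ is used. Because a common norm is used on both sides, the lower bound of unity on the induced norm then follows with no appeal to norm-specific inequalities — the sharper Euclidean characterization is deferred to \Cref{prop:ident2}.
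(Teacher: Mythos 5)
Your proof is correct and follows essentially the same route as the paper's: simplify the saltation matrix to a rank-one update of the identity, pick a nonzero vector in the null space of $Dg_{j,j'}(x)$ that $\Xi(t,x)$ fixes, and conclude the induced-norm lower bound from the common norm on both domains. The only additions are your (harmless and reasonable) remarks on nondegeneracy guaranteeing a nontrivial kernel and the degenerate case $n=1$.
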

\begin{proof} Under the hypotheses of the proposition, we have that
  \begin{align}
\Xi(t,x)
= 
I+\frac{\left(\F_{j'}(t,\R(x))-\F_{j}(t,x)\right)\cdot Dg_{j,j'} (x)}{Dg_{j,j'}(x)\cdot \F_{j}(t,x)}.
  \end{align}
Fix $x\in \G_{j,j'}$ and let $z\in\text{Null}(Dg_{j,j'}(x))$. Then $\Xi(t,x) z=z$ so that always $\|\Xi(t,x)\|_{j,j'}\geq 1$.
\end{proof}

\begin{prop}
\label{prop:ident2}
   Under Assumptions \ref{assum:reset}, \ref{assump:vf}, \ref{assump:guard}, and \ref{assump:transverse}, for some $j,j'\in\J$, suppose $D\R_{j,j'}(x)=I$ for all $x\in \G_{j,j'}$,  that is, $\R_{j,j'}$ is a translation. Suppose also that $|\cdot|_j=|\cdot|_{j'}=|\cdot|_2$ where $|\cdot|_2$ denotes the standard Euclidean 2--norm. Then  $\|\Xi(t,x)\|_{j,j'}\leq 1$ if and only if $\|\Xi(t,x)\|_{j,j'}= 1$ if and only if
 \begin{align}
   \label{eq:26}
\F_{j'}(t,\R(x))-\F_{j}(t,x) = \alpha(t,x) Dg_{j,j'} (x)^T
 \end{align}
for all $t\geq 0$ and all $x\in\G_{j,j'}$ for some $\alpha:[0,\infty)\times \G_{j,j'}\to \mathbb{R}$ satisfying 
\begin{align}
  \label{eq:45}
0\leq \alpha(t,x)\leq \frac{-2Dg_{j,j'}(x)\cdot\F(t,x)}{|Dg_{j,j'} (x)|_2^2}.
\end{align}
\end{prop}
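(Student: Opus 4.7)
The plan is to reduce \eqref{eq:saltmat} to a rank-one perturbation of the identity and then characterize when this perturbation has operator $2$-norm at most $1$. Write $G = Dg_{j,j'}(x)$, $\Delta = \F_{j'}(t,\R(x))-\F_j(t,x)$, and $\beta = G\cdot\F_j(t,x)$; by Assumption~\ref{assump:transverse}, $\beta<0$. Since $D\R_{j,j'}(x) = I$, the saltation matrix reduces to
\begin{equation*}
\Xi(t,x) = I + \tfrac{1}{\beta}\Delta G,
\end{equation*}
where $\Delta G$ is the rank-one outer product of the column vector $\Delta$ with the row vector $G$. The first equivalence in the proposition is immediate from Proposition~\ref{prop:ident1}, which gives $\|\Xi(t,x)\|_{j,j'}\geq 1$ under the same hypotheses, so $\|\Xi(t,x)\|_{j,j'}\leq 1$ forces equality.

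For the second equivalence I would expand $|\Xi u|_2^{2}-|u|_2^{2}$ directly using the rank-one structure. Setting $s = Gu$, a straightforward calculation gives
\begin{equation*}
|\Xi u|_2^{2} - |u|_2^{2} = \frac{2s\,(\Delta^{\top} u)}{\beta} + \frac{|\Delta|_2^{2}\, s^2}{\beta^{2}},
\end{equation*}
so $\|\Xi(t,x)\|_{j,j'}\leq 1$ is equivalent to this expression being nonpositive for every $u\in\Rbb^{n_j}$. I would then decompose $u = v + \lambda G^{\top}/|G|_2^{2}$ with $v\in\mathrm{Null}(G)$, so that $Gu = \lambda$ and the expression separates into a part that is linear in $v$ and a part that is quadratic in $\lambda$.

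The crux of the argument is a forcing step: with $\lambda\neq 0$ fixed, the only $v$-dependent term is $2\lambda(\Delta^{\top}v)/\beta$, which is linear in $v\in\mathrm{Null}(G)$. For this to remain bounded above for all $v$ in the linear subspace $\mathrm{Null}(G)$ we must have $\Delta^{\top}v = 0$ for every such $v$, i.e., $\Delta\in\mathrm{span}(G^{\top})$. Hence $\Delta = \alpha G^{\top}$ for some scalar $\alpha\in\Rbb$, which is exactly \eqref{eq:26}.

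Substituting $\Delta = \alpha G^{\top}$ collapses the expression to $\tfrac{\alpha\lambda^{2}}{\beta^{2}}(2\beta + \alpha|G|_2^{2})$, and requiring nonpositivity for all $\lambda$ reduces to $\alpha(2\beta+\alpha|G|_2^{2})\leq 0$. Because $\beta<0$, a short sign analysis on $\alpha$ yields $0\leq \alpha\leq -2\beta/|G|_2^{2}$, which is \eqref{eq:45}. The main obstacle is the forcing step above; one also needs to observe that the degenerate case $\mathrm{Null}(G) = \{0\}$ (possible only when $n_j = 1$, since $G\neq 0$ by nondegeneracy of the guard function) makes the structural constraint automatic, so only the scalar inequality on $\alpha$ must be checked. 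Because every implication used is actually an equivalence, both directions of the proposition follow simultaneously from the same computation.
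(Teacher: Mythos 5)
The paper omits its own proof of this proposition (``some proofs omitted due to page constraints''), but your argument is correct and complete: reducing $\Xi$ to the rank-one perturbation $I+\beta^{-1}\Delta G$, using unboundedness of a nonzero linear functional on $\mathrm{Null}(G)$ to force $\Delta\in\mathrm{span}(G^{\top})$, and then checking the scalar quadratic $\alpha(2\beta+\alpha|G|_2^2)\leq 0$ is exactly the computation the statement calls for, and every step is an equivalence so both directions follow at once. Your aside about $\mathrm{Null}(G)=\{0\}$ is consistent with how the paper implicitly treats Proposition~\ref{prop:ident1}, so no further caveat is needed.
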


\section{Examples}
\renewcommand{\SS}{\texttt{S}}
\newcommand{\CC}{\texttt{C}}

\subsection{A planar piecewise--linear system}
Consider a piecewise--linear system with states in the left-- and right--half plane,
\eqnn{
\D = \D_-\coprod\D_+,\ 
\D_\pm = \set{x = (x_1,x_2)\in\Rbb^2 : \pm x_1 \ge 0},
}
whose continuous dynamics are given by $\dot{x} = A_\pm x,\ x\in\D_\pm$, where
\eqnn{
A_\pm = \mat{cc}{\alpha_\pm & -\beta_\pm \\ \beta_\pm & \alpha_\pm}
}
so that
$\spec A_\pm = \alpha_\pm \pm j\beta_\pm$
and hence the standard Euclidean matrix measure $\mu_2(A_\pm) = \sigma_{\max} \paren{\frac{1}{2} A^\tr_\pm + A_\pm} = \alpha_\pm$.
Supposing $\beta_\pm > 0$, 
all trajectories in $D_\pm$ will eventually reach the set $G_\pm = \set{(x_1,x_2)\in\Rbb^2 : \pm x_1 \le 0, \pm x_2 > 0}$,
where a reset will be applied that scales the second coordinate by $c_\pm > 0$,
\eqnn{
\forall x\in G_\pm\subset\D_\pm : x^+ = \R_\pm(x^-) = (x_1^-,c_\pm x_2^-)\in\D_\mp.
}
This yields a saltation matrix
\eqnn{
\Xi_\pm & = D_x \R_\pm + \paren{\F^\mp - D_x \R_\pm \cdot \F^\pm}\frac{D_x g_\pm}{D_x g_\pm \cdot \F^\pm} \\
& = \mat{cc}{\frac{\beta_\mp}{\beta_\pm} & 0 \\ \frac{1}{\beta_\pm}\paren{\alpha_\pm c_\pm - \alpha_\mp} & c_\pm}.
}

With respect to the standard Euclidean 2--norm:
\begin{enumerate}
\item The continuous--time flows are contractive if $\alpha_\pm < 0$, expansive if $\alpha_\pm > 0$.
\item Unless $A_\pm = A_\mp$ and $\R_\pm = \id_{\Rbb^2}$, one of the discrete--time resets is an expansion.
\end{enumerate}
The first claim follows directly from $\mu_2(A_\pm) = \alpha_\pm$.
To see that the second claim is true, note that 
$\beta_+ \ne \beta_-$, $c_+ > 1$, or $c_- > 1$ implies one of the diagonal entries of one of the $\Xi$'s are expansive.
Taking $\beta_+ = \beta_-$ and $c_\pm \le 1$
to ensure that
the diagonal entries of $\Xi_\pm$ are non--expansive
yields a saltation matrix of the form
\eqnn{
\Xi = \mat{cc}{1 & 0 \\ d & c}
}
with singular values
\eqnn{
\sigma(\Xi) &= \spec\frac{1}{2}\paren{\Xi^\tr + \Xi} \\
&= \frac{1}{2}\paren{(c+1) \pm \sqrt{d^2 + (c-1)^2}};
}
unless $c = 1$ (i.e. $c_+ = c_- = 1$ so $\R_\pm = \id_{\Rbb^2}$) 
and 
$d = 0$ (i.e. $\alpha_+ = \alpha_-$ so $A_+ = A_-$),
one of these singular values is larger than unity.

\subsection{Traffic flow with capacity drop}

  Consider a length of freeway divided into two segments or \emph{links}. The state of the system is the traffic \emph{density} on the two links.  Traffic flows from the first segment to the second. The second link has a finite \emph{jam density} $\xjam_2>0$, and we consider link 1 to have infinite capacity so that always the state $x$ satisfies $x\in\mathcal{X}=[0,\infty)\times [0,\xjam_2]\subset \mathbb{R}^2$.

The system has two modes, an \emph{uncongested} (resp., \emph{congested}) mode for which the flow between the two links depends only on the density of the upstream (resp., downstream) link. The dynamics of the uncongested mode is
  \begin{align}
    \dot{x}_1&=u(t)-\Delta_1(x_1)\\
    \dot{x}_2&= \Delta_1(x_1)-\Delta_2(x_2)
  \end{align}
for which we write $\dot{x}=\F_\text{uncon}(x,t)$ assuming a fixed $u(t)$, and for the congested mode is
\begin{align}
  \label{eq:17}
    \dot{x}_1&=u(t)-S_2(x_2)\\
    \dot{x}_2&=S_2(x_2)-\Delta_2(x_2)  
\end{align}
for which we write $\dot{x}=\F_\text{con}(x,t)$ where $\Delta_1$ and $\Delta_2$ are continuously differentiable and strictly increasing \emph{demand} functions satisfying $\Delta_1(0)=\Delta_2(0)=0$, and $S_2$ is a continuously differentiable and strictly decreasing \emph{supply} function satisfying $S_2(\xjam_2)=0$; see \cite{coogan2015compartmental} for further details of the model.

The system is in the congested mode only (but not necessarily) if $  \Delta_1(x_1)\geq  S_2(x_2)$. 
Moreover, empirical studies suggest 
that traffic flow exhibits a hysteresis effect such that traffic remains in the uncongested mode until $x_2\geq \bar{x}_2$ for some $\bar{x}_2$ and does not return to the uncongested mode until $x_2\leq \ulx_2$ for some $\ulx_2<\bar{x}_2$ \cite{Cassidy:1999vl, Laval:2006zp}. Here, we assume $\bar{x}_2\in [0,\xcrit_2)$ where $\xcrit_2$ is the unique density satisfying 
$\Delta_2(x_2)=S_2(x_2)$; see Figure \ref{fig:traffic2}. This effect is called \emph{capacity drop}.

We model the traffic flow as a hybrid system with four domains $\D_{\SS\CC}, \D_{\SS\bar{\CC}}, \D_{\bar{\SS}\CC}, \D_{\bar{\SS}\bar{\CC}}$ where 
\begin{align}
  \label{eq:49}
  \D_{\SS\CC}&=\mathcal{X}\cap\{x:\Delta_1(x_1)\leq S_2(x_2)\}\cap\{x:x_2\geq \ulx_2\},\\
\D_{\bar{\SS}\CC}&=\mathcal{X}\cap\{x:\Delta_1(x_1)\geq S_2(x_2)\}\cap\{x:x_2\geq \ulx_2\},\\
\D_{\SS\bar{\CC}}&=\mathcal{X}\cap\{x:\Delta_1(x_1)\leq S_2(x_2)\}\cap\{x:x_2\leq \bar{x}_2\},\\
\D_{\bar{\SS}\bar{\CC}}&=\mathcal{X}\cap\{x:\Delta_1(x_1)\geq S_2(x_2)\}\cap\{x:x_2\leq \bar{x}_2\},
\end{align}
and the index set is given by $\mathcal{J}=\{\SS\bar{\CC},\bar{\SS}\bar{\CC},\bar{\SS}\CC,\bar{\SS}\bar{\CC}\}$. Furthermore, $\F_{\SS\CC}= \F_{\SS\bar{\CC}}= \F_{\bar{\SS}\bar{\CC}}=\F_\text{uncon}$ and $\F_{\bar{\SS}\CC}=\F_\text{con}$. As a mnemonic, $\SS$ indicates that $\Delta_1(x_1)\leq S_2(x_2)$ so that adequate downstream supply is available, and $\bar{\SS}$ indicates the opposite. Similarly, $\CC$ indicates the status of the hysteresis effect so that the congestion mode is only possible for domains with $\CC$, and impossible for domains with $\bar{\CC}$.

\begin{figure}
  \centering

   \begin{tikzpicture}[scale=.08]
\fill[gray] (15,30) rectangle (38,38);
\fill[gray] (37.8,32) rectangle (70,38);
\fill[gray] (37.9,30) .. controls +(8,0) and (42,32) .. (50,32) -- (38,38);
\draw[dashed, white] (15,32) -- (40,32);
\draw[dashed, white] (15,34) -- (70,34);
\draw[dashed, white] (15,36) -- (70,36);
  \end{tikzpicture}  

\tikzstyle{link}=[line width=1pt, ->,>=latex, black]
\tikzstyle{junc}=[draw,circle,inner sep=1pt,minimum width=5pt,draw=black]
  \begin{tikzpicture}
\node[junc] (a) at (0,0)  {};
\draw[link,->] (-2,0) -- node[above]{\footnotesize  Link 1} node[below]{\footnotesize $x_1$, density}(a);
\draw[link,->] (a) -- node[above]{\footnotesize  Link 2} node[below]{\footnotesize $x_2$, density} (2,0);
  \end{tikzpicture}
\begin{tabular}{ c c}
\hspace{-10pt}
  \begin{tikzpicture}
\begin{axis}[width=1.8in,
     axis y line=left,
   axis x line=bottom,
every axis x label/.style={at={(rel axis cs:1,0)},anchor=west},
every axis y label/.style={at={(rel axis cs:0,1)},anchor=south},
   xmin=0, xmax=180, ymax=3400,ymin=0, 
   xtick={160},
   xticklabels={$\xjam_1$},
   ytick={\empty},
    xlabel={$x_1$},
    ylabel={flow rate},
    clip=false,
    legend style={at={(.99,.99)},anchor=north east,,cells={align=left}},
    legend cell align=left,
    at={(-260,0)},     ]
    \addplot[name path=A,draw=cfblue,line width=2pt,domain=0:160,samples=50] (\x,{2400*(1-exp(-\x/33))});
    \node[anchor=north] at (160,2400) {\footnotesize $\Delta_1(x_1)$};
\end{axis}
\end{tikzpicture}
&
\hspace{-20pt}
    \begin{tikzpicture}
\begin{axis}[width=1.8in,
     axis y line=left,
   axis x line=bottom,
every axis x label/.style={at={(rel axis cs:1,0)},anchor=west},
every axis y label/.style={at={(rel axis cs:0,1)},anchor=south},
   xmin=0, xmax=180, ymax=3400,ymin=0, 
   xtick={35,65,75,160},
   xticklabels={$\ul{x}_2$,$\bar{x}_2$,, $\xjam_2$},
   ytick={\empty},
    xlabel={$x_2$},
    ylabel={flow rate},
    clip=false,
    legend style={at={(.99,.99)},anchor=north east,,cells={align=left}},
    legend cell align=left,
    at={(-260,0)},     ]
    \addplot[name path=A,draw=cfblue,line width=2pt,domain=0:160,samples=50] (\x,{1900*(1-exp(-\x/33))});
    \addplot[name path=B,draw=dorange,line width=2pt,domain=0:160,samples=2] (\x,{20*(160-x)});
    \path[name path=axis] (axis cs:35,0) -- (axis cs:65,0);
\addplot[fill=dcompb!50]
    fill between[
        of=B and axis,
        soft clip={domain=35:65},
    ];
    \addplot[dashed] coordinates {(75,0) (75,1700)};
    \node[inner sep=0pt] (hyst) at (110,2600) {\footnotesize Hysteresis};
    \draw[->] (hyst.west) to [bend right] +(-25,-650);
    \node[anchor=north] at (160,1900) {\footnotesize $\Delta_2(x_2)$};
    \node[anchor=south] at (160,300) {\footnotesize  $S_2(x_2)$};
    \node[inner sep=0pt, anchor=west] (xcrit) at (90,400) {\footnotesize $\xcrit_2$};
    \draw[->] (xcrit.west) to [bend right] +(-15,0);
\end{axis}
\end{tikzpicture}
\end{tabular}
\caption{Traffic flows from link 1 to link 2. Flow at the interface of link 1 and link 2 depends on the demand $\Delta_1(x_1)$ of link 1 and the supply $S_2(x_2)$ of link 2 and exhibits a hysteresis effect. Traffic exits the network at a flow rate equal to the demand $\Delta_2(x_2)$ of link 2.}
  \label{fig:traffic2}
\end{figure}
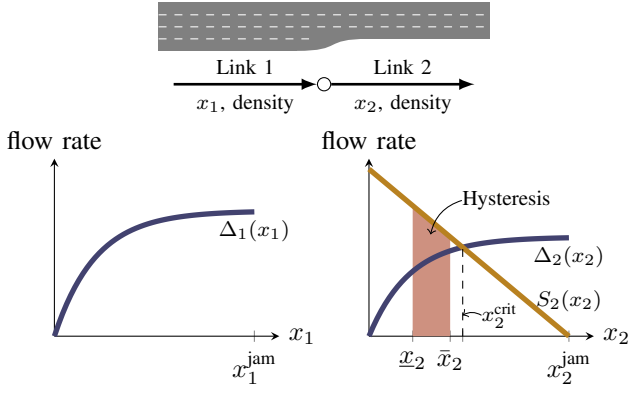

\begin{figure}
  \centering
\begin{tabular}{c c}
\hspace{-12pt}
  \begin{tikzpicture}
  \begin{axis}[width=1.8in,
    height=1.8in,
   axis y line=left,
   axis x line=bottom,
every axis x label/.style={at={(rel axis cs:1,0)},anchor=west},
every axis y label/.style={at={(rel axis cs:0,1)},anchor=south},
   xmin=0, xmax=220, ymax=170,ymin=0, 
   ytick={40,100,160},
   yticklabels={$\ul{x}_2$,$\bar{x}_2$, $\xjam_2$},
   xtick={\empty},
    xlabel={$x_1$},
    ylabel={$x_2$},
    clip=false,
    legend style={at={(.99,.99)},anchor=north east,,cells={align=left}},
    legend cell align=left,
    at={(0,0)}
    ]
    \addplot[name path=A,draw=dorange,line width=2pt,domain=0:200,samples=50] (\x,{-(2200*(1-exp(-\x/33))-3200)/20});
    \addplot[name path=B,draw=cfblue,line width=2pt,domain=0:200,samples=2] (\x,{40});
    \addplot[name path=C,draw=dorange,line width=0pt,domain=0:200,samples=2] (\x,{160});
    \addplot[llblue!60] fill between[of=A and B];
    \addplot[lorange!60] fill between[of=A and C];
    \node[anchor=south] (a) at (130,75) {\footnotesize $\Delta_1(x_1)=S_2(x_2)$};
    \draw [->] ($(a.north west)+(10,-15)$) to [bend right] +(-22,12);
    \node at (100,130) {$\D_{\bar{\SS}\CC}$};
    \node at (25,60) {$\D_{{\SS}\CC}$};
\end{axis}
\end{tikzpicture}
&
\hspace{-20pt}
  \begin{tikzpicture}
  \begin{axis}[width=1.8in,
    height=1.8in,
   axis y line=left,
   axis x line=bottom,
every axis x label/.style={at={(rel axis cs:1,0)},anchor=west},
every axis y label/.style={at={(rel axis cs:0,1)},anchor=south},
   xmin=0, xmax=220, ymax=170,ymin=0, 
   ytick={40,100,160},
   yticklabels={$\ul{x}_2$,$\bar{x}_2$, $\xjam_2$},
   xtick={\empty},
    xlabel={$x_1$},
    ylabel={$x_2$},
    clip=false,
    legend style={at={(.99,.99)},anchor=north east,,cells={align=left}},
    legend cell align=left,
    at={(300,0)},
    ]
    \addplot[name path=A,draw=cfblue,line width=2pt,domain=25:200,samples=50] (\x,{-(2200*(1-exp(-\x/33))-3200)/20});
    \addplot[name path=B,draw=cfblue,line width=0pt,domain=0:200,samples=2] (\x,{00});
    \addplot[name path=C,draw=cfblue,line width=2pt,domain=0:200,samples=2] (\x,{100});
    \addplot[name path=D,draw=cfblue,line width=2pt,domain=0:25,samples=2] (\x,{100});
    \addplot[llblue!60] fill between[of=C and B];
    \addplot[llblue!60] fill between[of=A and C];
    \node[anchor=north] (a) at (85,38) {\footnotesize $\Delta_1(x_1)=S_2(x_2)$};
    \draw [->] ($(a.north east)+(-10,-15)$) to [bend right] +(22,25);
    \node at (150,80) {$\D_{\bar{\SS}\bar{\CC}}$};
    \node at (25,60) {$\D_{{\SS}\bar{\CC}}$};
\end{axis}
\end{tikzpicture}
\end{tabular}
\caption{The traffic network is modeled as a hybrid system with four domains, $\mathcal{J}=\{\SS\bar{\CC},\bar{\SS}\bar{\CC},\bar{\SS}\CC,\bar{\SS}\bar{\CC}\}$. The only non-identity reset occurs when the system transitions from $\D_{\bar{\SS}\bar{\CC}}$ to $\D_{\bar{\SS}\CC}$.}
  \label{fig:traffic}
\end{figure}

Define the guard functions
\begin{align}
  g_{\SS\CC,\bar{\SS}\CC}(x)&=  g_{\SS\bar{\CC},\bar{\SS}\bar{\CC}}(x)=S_2(x_2)-\Delta_1(x_1),\\
  g_{\bar{\SS}\CC,\SS\CC}(x)&=  g_{\bar{\SS}\bar{\CC},\SS\bar{\CC}}(x)=\Delta_1(x_1)-S_2(x_2),\\
g_{\SS\bar{\CC},\SS\CC}(x)&=g_{\bar{\SS}\bar{\CC},\bar{\SS}\CC}(x)=\bar{x}_2 -x_2,\\
g_{\SS\CC,\SS\bar{\CC}}(x)&=x_2-\ulx_2.
\end{align}

If no guard function is specified between two domains, then no transition is possible between those domains. For all $j,j'\in \mathcal{J}$ such that $g_{j,j'}$ is defined, let $\G_{j,j'}=\{x:g_{j,j'}(x)\leq 0\}\cap \D_j$, and let $\G_j=\cup_{j'\in \J}\G_{j,j'}$ for each $j\in\J$.

We have that
\begin{align}
  \label{eq:28}
  J_\text{uncon}(x)&=D_x\F_\text{uncon}(x,t)=
  \begin{bmatrix}
    -D\Delta_1(x_1)&0\\
D\Delta_1'(x_1)&-D\Delta_2(x_2)
  \end{bmatrix},\\
  J_\text{con}(x)&=D_x\F_\text{con}(x,t)=
                     \begin{bmatrix}
                       0&-DS_2(x_2)\\
                       0&DS_2(x_2)-D\Delta_2(x_2)
                     \end{bmatrix}.
\end{align}

Let $|\cdot|_1$ be the standard one-norm and $\mu_1$ the corresponding matrix measure. It can be verified that
\begin{align}
  \label{eq:30}
\mu_1(  J_\text{uncon}(x))&\leq 0\ \forall x\in\mathcal{X},\quad \text{and}\\
\quad \mu_1(J_\text{con}(x))&\leq 0\ \forall x\in\mathcal{X}.
\end{align}
Now consider a trajectory in domain $\D_{\bar{\SS}\bar{\CC}}$ transitioning to $\D_{\bar{\SS}{\CC}}$ so that $S_2(x_2)\leq \Delta_1(x_1)$ and the system experiences a capacity drop so that the dynamics transition from uncongested to congested. Computing the saltation matrix $\Xi$ for $x$ such that $g_{{\bar{\SS}\bar{\CC}}, {\bar{\SS}{\CC}}}(x)=0$, we have
\begin{align}
  \label{eq:50}
 & \Xi_{{\bar{\SS}\bar{\CC}}, {\bar{\SS}{\CC}}}(t,x)=I+\frac{(\F_\text{con}(t,x)-\F_\text{uncon}(t,x))\cdot D_x g_{{\bar{\SS}\bar{\CC}}, {\bar{\SS}{\CC}}} (x)}{D_x g_{{\bar{\SS}\bar{\CC}}, {\bar{\SS}{\CC}}} (x)\cdot \F_\text{uncon}(t,x)}\\
&=I+  \frac{-1}{\Delta_1(x_1)-\Delta_2(\bar{x}_2)}\begin{bmatrix}
    \Delta_1(x_1)-S_2(\bar{x}_2)\\
S_2(\bar{x}_2)-    \Delta_1(x_1)
  \end{bmatrix}\cdot
  \begin{bmatrix}
    0&-1
  \end{bmatrix}
\end{align}
for all $x\in \{x:x_2=\bar{x}_2\}=\G_{{\bar{\SS}\bar{\CC}}, {\bar{\SS}{\CC}}}(x)$. Let $ \rho(x_1)=\frac{\Delta_1(x_1) -S_2(\bar{x}_2)}{\Delta_1(x_1)-\Delta_2(\bar{x}_2)}$ so that
\begin{align}
  \label{eq:52}
    \Xi_{{\bar{\SS}\bar{\CC}}, {\bar{\SS}{\CC}}}(t,x)=
  \begin{bmatrix}
    1&\rho(x_1)\\
    0&1-\rho(x_1)
  \end{bmatrix}
\end{align}
for all $x\in \{x:x_2=\bar{x}_2\}$. Because $\bar{x}_2<\xcrit_2$, it holds that $\Delta_2(\bar{x}_2)<S_2(\bar{x}_2)$ and therefore
\begin{align}
  \label{eq:47}
0\leq  \rho(x_1)<1\quad \forall x_1\in\{x_1:  \Delta_1(x_1) \geq S_2(\bar{x}_2)\}.
\end{align}
Therefore, $\|    \Xi_{{\bar{\SS}\bar{\CC}}, {\bar{\SS}{\CC}}}(t,x)\|_1=1$ for all $x\in \{x:x_2=\bar{x}_2\}=\G_{{\bar{\SS}\bar{\CC}}, {\bar{\SS}{\CC}}}$.

For all $(j,j')\neq ({{\bar{\SS}\bar{\CC}}, {\bar{\SS}{\CC}}})$ such that $\G_{j,j'}$ is nonempty, it can be verified that $\F_{j'}(x)=\F_j(x)$ for all $x\in \G_{j,j'}$ so that $\Xi_{j,j'}(t,x)=I$ and trivially $\|\Xi_{j,j'}(t,x)\|_1=1$. Applying Theorem \ref{thm:main}, we conclude that
\begin{align}
  \label{eq:25}
  |y(t)-x(t)|_1\leq |y(0)-x(0)|_1
\end{align}
for any pair of trajectories $x(t), y(t)$ of the traffic flow system with initial conditions $y(0),x(0)$ subject to any input $u(t)$, that is, the system is nonexpansive. 

In fact, it is possible to conclude that $\lim_{t\to\infty}|x(t)-\gamma(t)|_1=0$ for any initial condition $x(0)$ using an approach analogous to that used in \cite[Example 4]{Coogan:2016kx}, which considers contraction in traffic flow without modeling capacity drop. In particular, if the derivatives of $\Delta_1$, $\Delta_2$, and $S_2$ are bounded away from zero, and $u(t)$ is periodic with period $T$ and is such that there exists a periodic orbit $\gamma(t)$ of the hybrid system such that $\gamma(t^*)\in\text{int}(\D\backslash\D_{{\bar{\SS}{\CC}}})$ for some $t^*$, then the system is strictly contracting towards $\gamma(t)$ for a portion of each period $T$. This implies that eventually, each trajectory converges to $\gamma(t)$.  Figure \ref{fig:samp} shows simulation results for several initial conditions where $u(t)=1500+800 \cos(2\pi t)$, $\Delta_1(x_1)=2400(1-e^{-x_1/33})$, $\Delta_2(x_2)=1900(1-e^{-x_2/33})$, $S_2(x_2)=20(160-x)$, $\overline{x}_2=60$, $\underline{x}_2=55$, and time is in hours.

\begin{figure}
  \centering
  \begin{tabular}{l}
  \includegraphics[height=1.2in]{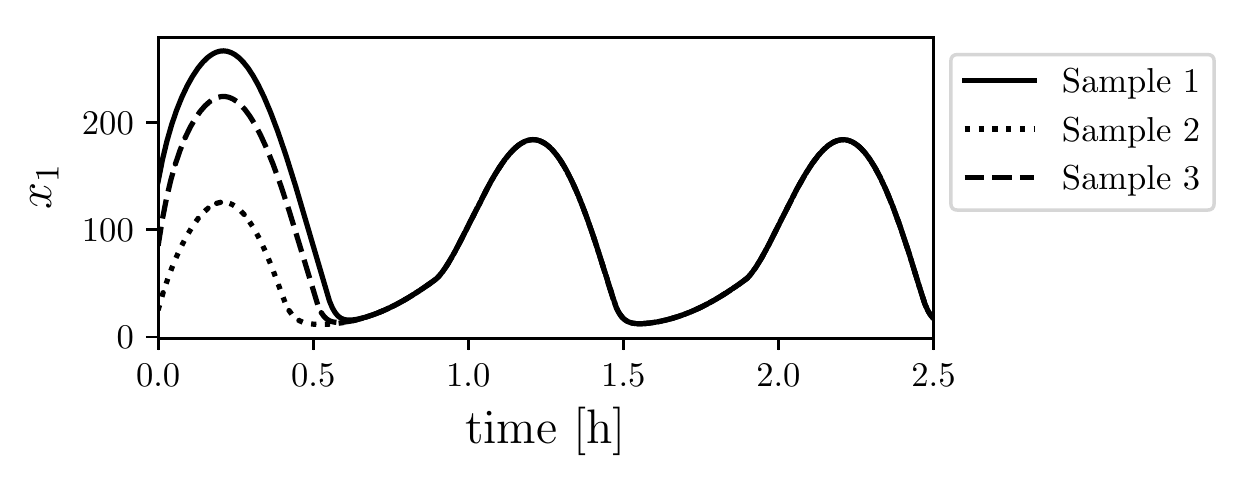}\\
  \includegraphics[height=1.2in]{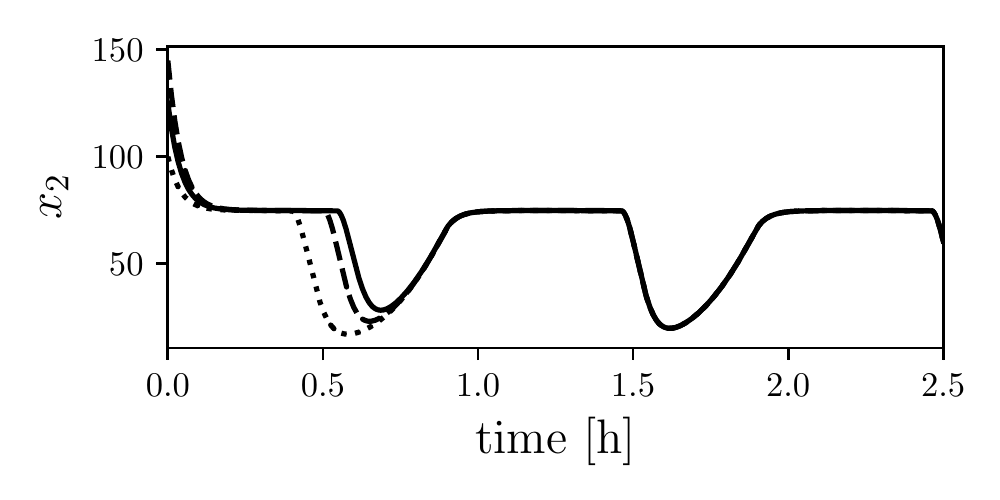}    
  \end{tabular}
  \caption{Sample trajectories of the  two link traffic network with capacity drop modeled as hysteresis with a periodic input. All trajectories contract to a unique periodic trajectory.}
\label{fig:samp}
\end{figure}

\section{Conclusion}
We generalized infinitesimal contraction analysis to hybrid systems by leveraging local dynamical properties of continuous--time flow and discrete--time reset to bound the time rate of change of the intrinsic distance between trajectories.
In addition to expanding the toolkit for analysis of hybrid systems, under dwell time assumptions we provide a novel bound for the intrinsic distance metric.

\bibliographystyle{ieeetr}
\bibliography{burden}

\end{document}